\newcommand{\Span}{\operatorname{span}}
\newcommand{\Spn}{\mathbb{S}^n_+}
\newcommand{\Dpn}{\mathbb{D}^n_+}
\newcommand{\Id}{\mathbf{I}}
\newcommand*{\QEDB}{\hfill\ensuremath{\diamond}}%
\newcommand{\tr}{^\top}
\newcommand{\He}{\operatorname{He}}
\newcommand{\Diag}{\operatorname{diag}}
\newcommand{\0}{{\bf 0}}
\newcommand{\1}{{\bf 1}}
\newcommand{\R}{{\mathbb{R}}}
\newcommand{\NN}{{\mathbb{N}}}
\newcommand{\source}{{THIS IS A PREPRINT VERSION. IF YOU FOUND THIS READING USEFUL FOR YOUR RESEARCH PLEASE CITE THE PUBLISHED VERSION DOI: \href{https://doi.org/10.1109/LCSYS.2019.2925524}{https://doi.org/10.1109/LCSYS.2019.2925524}}}
\newtheorem{theorem}{Theorem}
\newtheorem{corollary}{Corollary}
\newtheorem{example}{Example}
\newtheorem{problem}{Problem}
\newtheorem{proposition}{Proposition}
\newcommand\x{\times}
\tikzset{style green/.style={
    set fill color=green!50!lime!60,
    set border color=white,
  },
  style cyan/.style={
    set fill color=cyan!50!,
    set border color=white,
  },
  style orange/.style={
    set fill color=orange!80!red!60,
    set border color=white,
  },
  hor/.style={
    above left offset={-0.15,0.31},
    below right offset={0.15,-0.125},
    #1
  },
  ver/.style={
    above left offset={-0.1,0.3},
    below right offset={0.15,-0.15},
    #1
  }
}
\newcommand{\fd}[1]{\textcolor{black}{#1}}
\def\ps@IEEEtitlepagestyle{}
\title{On the Design of Structured Stabilizers for LTI Systems}
\author{Francesco~Ferrante, \IEEEmembership{Member, IEEE} Fabrizio Dabbene, \IEEEmembership{Senior Member, IEEE},\\ and Chiara Ravazzi, \IEEEmembership{Member, IEEE}%
\thanks{Francesco Ferrante is with Univ. Grenoble Alpes, CNRS, GIPSA-lab, F-38000 Grenoble, France. Email: francesco.ferrante@univ-grenoble-alpes.fr.}
\thanks{Fabrizio Dabbene and Chiara Ravazzi are with the National Research Council of Italy, CNR-IEIIT, c/o Politecnico di Torino, Corso Duca degli Abruzzi 14, 10129 Torino, Italy.
Email: fabrizio.dabbene@ieiit.cnr.it, chiara.ravazzi@ieiit.cnr.it.}%
\thanks{\textcolor{blue}{This file contains fixes to three typos in the published version. The typos are in blue font and there is a footnote explaining those. \textbf{These typos are minor and do not affect any of the results in the paper.} Last update: \today.}} 
}
\begin{document}
\tikzstyle{line} = [draw, -latex']
\maketitle
\begin{abstract}
Designing a static state-feedback controller subject to structural constraint achieving asymptotic stability is a relevant problem with many applications, including network decentralized control,
coordinated control, and sparse feedback design.
Leveraging on the Projection Lemma, this work presents a new solution to a class of state-feedback control problems, in which the controller is constrained to belong to a given linear space.
We show through extensive discussion and numerical examples that our approach leads to several advantages with respect to existing methods: first, it is computationally efficient; second, 
it is less conservative than previous methods, since it 
relaxes the requirement of restricting the Lyapunov matrix to a block-diagonal form.
\end{abstract}
\begin{IEEEkeywords}
Linear systems, LMIs, Decentralized control.
\end{IEEEkeywords}
\section{Introduction}
\IEEEPARstart{M}{odern} control engineering problems involve large-scale systems, frequently composed by different subsystems communicating between each others through complex interconnections. These require the design of control laws which are i) computationally efficient and ii) decentralized/distributed, reducing costly communication overhead \cite{zecevic2010control,mihailosurvey2016}. 

The design of state feedback control strategies for such systems requires the introduction of information structured constraints. Besides fully decentralized control \cite{wang1973stabilization,bakule2008overview,dandreaTAC}, in which the feedback gain $K$ is required to obey to a block-diagonal structure, more complicated constraints arise when one is interested, e.g.,\ in interconnected and/or coordinated controllers, 
see~\cite{antonellisurvey2013,mihailosurvey2016} and references therein.

It should be remarked that adding structural constraints to the control design problem immediately destroys its nice properties, even in the simplest case of fully decentralized state-feedback design. Indeed, this problem becomes nonconvex, and it is known to be NP-hard when bounds on the gain are present \cite{BlondelNP}.
Some positive results exist in the cases when the controllers $K$ are constrained to  a set that is \textit{quadratically invariant} with respect to the given system. When this property holds, the authors of \cite{LallCDC,Lall2006} showed that the problem can be  reduced to a tractable convex optimization adopting an operator design approach, via Youla parametrization \cite{Youla1976}. However, the resulting problem becomes infinite-dimensional, and needs to be approximated, usually leading  to arbitrarily high order  controllers.

Due to the importance of the problem, 
the last thirty years have witnessed a large body of literature on the problem. The available approaches can be divided into two main classes.

A first line of research directly tackles the nonconvex problem by means of different optimization-based techniques:
augmented Lagrangian and alternating direction method of multipliers (ADMM) in \cite{mihailoTAC,mihailoDistributed}, sequential convex-programming in
\cite{mihailoSCP}.
These approaches have the advantage of being rather general, and to allow to consider also sparsity-promoting approaches. On the other hand, they suffer the classical drawbacks of nonconvex optimization: the returned solution may result to  sub-optimal solutions and have usually few a-priori convergence guarantees.
An interesting line of research is represented by the use of moment-based and sum-of-squares optimization methods. In \cite{sznaier2016}, a convergent sequence of tractable convex relaxations with optimality certificates is derived by applying a polynomial optimization approach.  The proposed solution 
exploits the derivations in \cite{lavaei2013}, which shows that optimal decentralized control problem can be reformulated as a rank constrained optimization. On the same direction, \cite{chesi2017} shows that the polynomial-based approximation are able to provide necessary and sufficient conditions, provided that the order of the polynomial  is chosen to be sufficiently high.  These methods provide solutions allowing to trade off between conservatism and computational burden.

A second approach is based on a classical linear matrix inequalities (LMI) formulation. These approaches seek for particular cases in which the Lyapunov LMI admits a solution. 
To the best of our knowledge, all these approaches stem from the 
key technical observation that adopting a block-diagonal Lyapunov matrix leads to a solution to the Lyapunov LMI that is compatible to the desired structure.
In particular, the literature then concentrates on specific cases for which this choice turns out to be nonconservative.
This is for instance the case of internally-positive systems, for which the closed-loop state space matrix needs to be Metzler. It is a well-known fact \cite{positivematrices} that the  choice of a diagonal Lyapunov matrix is not conservative in this case, since Hurwitz Metzler matrices are diagonally stable. This consideration is at the basis of the results in \cite{tanaka2011}. Similar considerations hold for triangular systems, see \cite{masonchordal2014}.
Similarly,  in \cite{rantzer2016} LTI systems with symmetric and Hurwitz state (hence positive definite) matrices are considered.

Interesting results have also been derived in the case of so-called network-decentralized control
\cite{blanchiniCDC,blanchini2015network}, i.e.\ class of systems formed by decoupled subsystems interconnected via a set of controllers that can use state-information exclusively from the nodes they interconnect. 
The authors are able to derive conditions under which  the adoption of a block-diagonal Lyapunov matrix leads to feasible designs. In particular, when the subsystems do not share common unstable eigenvalues this choice is shown to be nonconservative. In the general case, sufficient conditions for solvability are derived.

However, it should be remarked that the cases above represent more the exception than the rule: in general the choice of diagonal Lyapunov matrices turns out to be very conservative, and may lead to infeasible designs.

The present work represents an important step forward in the direction of reducing the conservativeness of Lyapunov-based LMI approaches.
By recurring to the Projection Lemma \cite{gahinet1994linear} we introduce a reformulation which allows to separate the design of the state-feedback gain from the Lyapunov matrix design, so allowing to remove the necessity of adopting a diagonal matrix. 
Despite its simplicity, this approach proves to be rather powerful, allowing to significantly enlarge the applicability of these design techniques.

The remainder of this paper is organized as follows. We conclude this section with common notations and preliminary definitions used in the paper. Section~\ref{sec:ProblemStatement} addresses the general problem of structured state-feedback controller and presents some applications. The main  theoretical contribution is summarized in Section~\ref{sec:MainResults}. Numerical results showing the advantages of the proposed approach are provided in Section~\ref{sec:Examples}. Finally, some concluding remarks and discussions on future developments are reported in Section~\ref{sec:Conclusion}.\\\\
\textbf{Notation}: Let $\NN,\R$ be the set of natural and real numbers, respectively. Given $n\in\NN$ we use the notation $[n]=\{1,\ldots, n\}$. The symbol $\R^{n\times m}$ represents the set of $n\times m$ real matrices. The symbols $\Spn$ and $\Dpn$ denote, respectively, the set of real $n\times n$ symmetric positive definite matrices and the set of diagonal positive definite matrices. The symbol $\mathcal{R}^n$
defines the set of $n\times n$ nonsingular matrices.
Let $A\in\R^{m\times n}$, we denote its transpose by $A\tr ,$ and, when $n=m$, we define $\He(A)=A+A\tr .$ 
The matrix $\mathrm{diag}(A_1,A_2,\ldots,A_N)$ represents the block-diagonal matrix with $A_1,A_2,\ldots,A_N$ as diagonal blocks. For a symmetric matrix $A$, negative definiteness is denoted by $A\prec 0$. The symbol $\bullet$ stands for symmetric block in symmetric matrices. The symbols $\circ $ and $\otimes $ are used for element-wise and Kronecker product between matrices, respectively.
\section{Structured control design}
\label{sec:ProblemStatement}
\subsection{Mathematical formulation}
Let us consider the following linear \fd{time-invariant} plant 
\begin{equation}\label{eq:LTI}
\dot{x}=Ax+Bu+w,
\end{equation}
where $x\in\R^{n}$ is the plant state, $u\in\R^{m}$ is the control input, $w\in\R^{n}$ is an exogenous, non-controllable signal affecting the system, and $A\in\R^{n\times n}, B\in\R^{n\times m}$ are constant matrices.
In this paper, we are interested in designing a stabilizing static state-feedback controller subject to linear constraints. 
 More precisely, given the linear space $\mathcal{S}\subset\R^{m\times n}$, we address the problem of finding $K\in\mathcal{S}$ such that the closed-loop system with $u=Kx$ is stable, which is equivalent to the following problem.
\begin{problem}
\label{problem:MainProblem}
Let $\mathcal{S}\subset\R^{m\times n}$ be a linear space, $A\in\R^{n\times n}$, and $B\in\R^{n\times m}$ be constant matrices. Find a static state-feedback gain $K$ such that $K\in\mathcal{S}$ and $A+BK$ is Hurwitz.
\end{problem}

\subsection{Applications}
\label{sec:Cases}
The structured feedback control design subject to linear constraints as formulated in Problem \ref{problem:MainProblem} is very relevant and the general framework applies to several applications. 
\subsubsection{Network decentralized control}
\label{sec:Decentralized}
In \fd{network} decentralized control (see \cite{blanchini2015network} and reference therein) a class of $N$ linear interconnected systems is considered:
$$
\dot{x}_i=A_ix_i+\sum_{j\in\mathcal{C}_i}B_{ij}u_j+E_iw,
$$
where $x_i\in\R^{n_i}$ is the state of the $i$-th subsystem with $i\in[N]$, $\mathcal{C}_i$ is the set of indexes of control input acting on the $i$-th subsystem, $u_j\in\R^{m_j}$. The overall dynamics can be written in matrix form as in \eqref{eq:LTI}, where $A$ is a block-diagonal and $B$ is a given sparse matrix with a specific zero-pattern constrained by sets $\{\mathcal{C}_i\}_{i\in[N]}$:
\begin{gather*}A=\mathrm{diag}(A_1,\ldots,A_N),\quad A_i\in\R^{n_i\times n_i},\\
B_{ij}=0,\quad \forall j\notin \mathcal{C}_i,
\end{gather*}
and $\sum_{i=1}^Nn_i=n$, $\sum_{i=1}^Nm_i=m$.
Finding a decentralized control corresponds to design a state-feedback control where each component  affecting a certain subset of systems has information coming from the state components associated to those systems only.
This leads to Problem \ref{problem:MainProblem}, 
 enforcing the feedback matrix to have the same zero-block of matrix $B\tr $.
\textcolor{blue}{More precisely, define\footnote{\textcolor{blue}{In the published version, $0$ and  $\1$ should be swapped. Moreover, the symbol $\1$ was not explicitly defined and the size of each $\mathcal{S}(B\tr)_{ij}$ was not specified.}} 
$$\mathcal{S}(B\tr)_{ij}\coloneqq\begin{cases}
0&\text{if}\ B_{ji}\neq 0\\
\1&\text{otherwise},
\end{cases}
$$
where $\1$ is the matrix with all entries equal to one and each $\mathcal{S}(B\tr)_{ij}$ has the same size of $B_{ji}\tr$.}
Then, the structural constraint in the problem in \cite{blanchini2015network} can be formulated as a gain-constrained stabilization problem for which
\begin{equation}
\mathcal{S}=\{K\in\R^{m\times n}:\ K\circ \mathcal{S}(B\tr)=0\}.
\label{eq:HadamardK}
\end{equation}
It can be easily shown that, for a given $B$, the set of matrices satisfying \eqref{eq:HadamardK} is a subspace of $\R^{m\times n}$. 


\subsubsection{Coordinated Control}
In \cite{rantzer2016} $N$ subsystems are considered:
$$
\dot{x}_i=A_ix_i+B_{i}v_i+w_i,
$$
where $x_i\in\R^{n_i}$ is the state of the $i$-th subsystem with $i\in[N]$, $u_i\in\R^{m_i}$ are the control inputs that have to coordinate in order to satisfy the following constraint
$$
\sum_{i\in[N]}v_i=0.
$$
Given the coordination constraint, the main goal is to design a static state feedback controller. We can write the problem as in \eqref{eq:LTI} where $A$ and $B$ are block-diagonal
\begin{gather*}A=\mathrm{diag}(A_1,\ldots, A_N),\quad A_i\in\R^{n_i\times n_i},\\
B=\mathrm{diag}(B_1,\ldots ,B_N),\quad B_i\in\R^{m_i\times n_i}.\end{gather*} and 
$$x=(x_1\tr ,\ldots,x_{N}\tr )\tr, u=(v_1\tr ,\ldots,v_{N}\tr )\tr.$$ 
Then, finding a coordinated control law leads to seek $K$ in 
{\small{\begin{align*}
\mathcal{S}\!=\scalebox{0.8}{$\!\left\{\!\!\!\begin{array}{l}K\in\R^{m\times n}:\\
K\!=\!\left[\begin{array}{cccc}
\left(1-\frac{1}{N}\right)K_1&-\frac{1}{N}K_2&\dots&-\frac{1}{N}K_N\\
-\frac{1}{N}K_1&\left(1-\frac{1}{N}\right)K_2&\dots&-\frac{1}{N}K_N\\
-\frac{1}{N}K_1& -\frac{1}{N}K_2&\dots&-\frac{1}{N}K_N\\
\vdots&\vdots&\vdots&\vdots\\
-\frac{1}{N}K_1&-\frac{1}{N}K_2&\dots&\left(1-\frac{1}{N}\right)K_{N} 
\end{array}
\right]\end{array}\!\!\!\right\}.$}
\end{align*}}}
It is trivial to see that $\mathcal{S}$ is a linear subspace of $\R^{m\times n}$, with $n=\sum_{i\in[N]}n_i$ and $m=\sum_{i\in[N-1]}m_i$. \medskip
\subsubsection{Sparse Feedback Control}
Another piece of literature (see \cite{Polyak2014} and reference therein) considers the problem of designing a linear state feedback minimizing the non-zero components of the control vectors.
This yields to seek control gains with minimum non-zero rows. 
More precisely, define
$$
\mathcal{S}_k=\{K\in\R^{m\times n}:\ \|K\|_{r,0}\leq k\}
$$
with $\|K\|_{r,0}=\sum_{i=1}^m(\max_{j\in[n]}|K_{ij}|)^0$ and the convention that $0^0=0$.
Finding a stabilizer $K$ with minimum number of non-zero rows is a combinatorial problem. It prescribes to solve the following problem
\begin{align*}
&\min k,\quad\text{s.t. }  A+BK \text{ is Hurwitz},\ K\in\mathcal{S}_k.
\end{align*}
It should be noticed that $\mathcal{S}_k$ is not a linear space but the union of linear spaces.

If, instead, the zeros pattern is fixed a-priori then the problem will reduce to constrain $K$ to a linear space. Similarly, in the so-called \textit{overlapping control} the subsystems might share some state variables and the gain matrix is required to assume the general form in Fig.~\ref{fig:overlapping}, see \cite{zecevic2010control}. Examples of such models can be found in \cite{Polyak2014}.
\begin{figure}[h]
\begin{equation*}\label{eq:appendrow}
 (a)\ \left[\begin{array}{cccc}
    0  & 0  & 0 & 0 \\
   \tikzmarkin[hor=style cyan]{el_1}  \x & \x  & \x& \x\tikzmarkend{el_1}  \\
    0   & 0   & 0& 0 \\
    0   & 0   & 0  & 0\\
    \tikzmarkin[hor=style cyan]{row} \x  &  \x  &  \x &  \x \tikzmarkend{row}\\
  \end{array}\right]\quad
 (b)\ \left[\begin{array}{ccccc}
  \tikzmarkin[hor=style cyan]{elAA}  \x  & \x & 0 & 0&0 \\
     \x & \x \tikzmarkend{elAA}  & 0& 0&0\\
    0   & \tikzmarkin[hor=style cyan]{elBB} \x  & \x\tikzmarkend{elBB}   & 0&0 \\
    0  & 0  & \tikzmarkin[hor=style cyan]{rowCCC} \x &  \x&  \x \\
    0  & 0  &\x&  \x &  \x \tikzmarkend{rowCCC}\\
  \end{array}\right]
 \end{equation*}
\begin{center}
\end{center}\caption{Sparse feedback control: $(a)$ row-sparse gain matrix structure $(b)$ overlapping gain matrix structure.}\label{fig:overlapping}
\end{figure}
\section{Main Results}
\label{sec:MainResults}
\subsection{Preliminary discussion}
It is well known from Lyapunov theory  \cite{boyd1994linear} that Problem~\ref{problem:MainProblem} is equivalent to solve the following bilinear matrix inequality problem  
\begin{equation}
\label{eq:Lyapunov0}
(A+BK)\tr P+P(A+BK)\prec\0,\quad\mbox{s.t.}\, K\in\mathcal{S}, P\in\Spn
\end{equation}
or, by taking the dual of \eqref{eq:LTI} as in \cite{ebihara2015springer}, equivalently\footnote{Considering the dual of \eqref{eq:LTI} enables to ease the reformulation of \eqref{eq:Lyapunov0} as a linear matrix inequality. This approach is used throughout the paper.} 
\begin{equation}
\label{eq:Lyapunov}
(A+BK) P+P(A+BK)\tr\prec\0\quad\mbox{s.t.}\, K\in\mathcal{S}, P\in\Spn.
\end{equation}
Finding the solution to a bilinear matrix inequality is rather challenging. However, following a common strategy pursued in the literature; see, e.g., \cite{boyd1994linear}, one can recast \eqref{eq:Lyapunov} into a linear matrix inequality by means of suitable change of variables. Following this approach, we provide the following sufficient condition for the solution to Problem~\ref{problem:MainProblem}. 
\begin{proposition}\label{prop:Lyapunov2}
Let $\mathcal{S}_Y\subseteq\R^{m\times n} ,\ \mathcal{S}_P\subseteq \Spn$ such that it holds the following implication
$P^{-1}\in\mathcal{S}_P, Y\in\mathcal{S}_Y\Longrightarrow YP^{-1}\in\mathcal{S}$.
If
\begin{equation}\begin{split}
\label{eq:Lyapunov2}
&\He(AP+BY)\prec\0\\
&\qquad\qquad\mbox{s.t.} \quad Y\in\mathcal{S}_Y,\ P^{-1}\in\mathcal{S}_P,\end{split}
\end{equation}
is feasible, then also
\eqref{eq:Lyapunov} is feasible and a stabilizing control law is given by $K=YP^{-1}$.
\end{proposition}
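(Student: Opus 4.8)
The plan is to prove the proposition by a direct substitution argument, exploiting the fact that the change of variables $K=YP^{-1}$ is exactly what linearizes the bilinear inequality \eqref{eq:Lyapunov}. First I would observe that, since $\mathcal{S}_P\subseteq\Spn$, feasibility of \eqref{eq:Lyapunov2} yields matrices $Y\in\mathcal{S}_Y$ and $P$ with $P^{-1}\in\mathcal{S}_P$; in particular $P^{-1}$ is symmetric and positive definite, hence so is $P=(P^{-1})^{-1}$, so that $P\in\Spn$ and the candidate gain $K:=YP^{-1}$ is well defined.

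Second, I would invoke the standing implication hypothesis of the proposition: from $P^{-1}\in\mathcal{S}_P$ and $Y\in\mathcal{S}_Y$ it follows that $K=YP^{-1}\in\mathcal{S}$, so the structural constraint required in \eqref{eq:Lyapunov} is automatically satisfied by the constructed $K$.

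Third comes the only algebraic step: writing $Y=KP$ and using the symmetry $P\tr=P$,
\begin{equation*}
\He(AP+BY)=\He\big((A+BK)P\big)=(A+BK)P+P(A+BK)\tr,
\end{equation*}
so the hypothesis $\He(AP+BY)\prec\0$ is precisely the Lyapunov inequality appearing in \eqref{eq:Lyapunov} evaluated at the pair $(K,P)$. This shows \eqref{eq:Lyapunov} is feasible with the exhibited $K$ and $P$, and since \eqref{eq:Lyapunov} (equivalently \eqref{eq:Lyapunov0}) is, by the standard Lyapunov characterization recalled above, equivalent to $A+BK$ being Hurwitz with $K\in\mathcal{S}$, the matrix $K=YP^{-1}$ solves Problem~\ref{problem:MainProblem}.

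I do not expect a substantive obstacle here: the statement is essentially a bookkeeping lemma that makes precise the classical congruence / change-of-variable trick, with the role of the auxiliary sets $\mathcal{S}_Y,\mathcal{S}_P$ being to guarantee $K\in\mathcal{S}$ a posteriori. The only two points deserving an explicit line are (i) deducing $P\in\Spn$ — in particular its invertibility and symmetry — from $P^{-1}\in\mathcal{S}_P\subseteq\Spn$, and (ii) the identity $\He\big((A+BK)P\big)=(A+BK)P+P(A+BK)\tr$, which relies on $P\tr=P$. Everything else follows immediately from the hypotheses.
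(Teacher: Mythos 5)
Your proposal is correct and follows essentially the same route as the paper's own proof: the change of variables $Y=KP$, the identity $\He(AP+BY)=(A+BK)P+P(A+BK)\tr$ using $P\tr=P$, and the standing implication hypothesis to conclude $K=YP^{-1}\in\mathcal{S}$. You merely spell out a few routine details (invertibility and symmetry of $P$) that the paper leaves implicit.
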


\begin{proof}
Defining $Y=KP$ with $Y\in\mathcal{S}_Y$ and $P^{-1}\in\mathcal{S}_P$, from \eqref{eq:Lyapunov} we get
$\He(AP+BY)\prec\0$.
Thus, since by construction $K=YP^{-1}\in\mathcal{S}$, $K$ solves Problem~\ref{problem:MainProblem}.
\end{proof}

Proposition \ref{prop:Lyapunov2} suggests a strategy for structured stabilization:
\begin{enumerate}
\item identify a couple of sets $\mathcal{S}_Y\subseteq \R^{m\times n}$ and $\mathcal{S}_P\subseteq\Spn$ for which $YP^{-1}\in\mathcal{S}$ for any $Y\in\mathcal{S}_Y$ and $P\in\mathcal{S}_P$;
\item solve the LMI in \eqref{eq:Lyapunov2};
\item choose  $K= YP^{-1}$.
\end{enumerate}
We retrieve now the frameworks proposed in Section~\ref{sec:Cases}.
\begin{example}[Network decentralized control] It can be easily verified that if
$\mathcal{S}$ is the set of matrices having the same block structure of $B$, 
$\mathcal{S}_P$ will coincide with the set of positive definite matrices with the same block-structure of matrix $A$ and 
$\mathcal{S}_Y=\mathcal{S}$. 
\end{example}
\begin{example}[Coordinated control] 
If we take $\mathcal{S}_P$ in the set of positive definite matrices with diagonal block structure and 
$\mathcal{S}_Y=\mathcal{S}$, then $K=YP\in\mathcal{S}$ for any $Y\in\mathcal{S}$ and $P\in\mathcal{S}_P$
\end{example}
\begin{example}[Sparse Feedback Control]
It should be noticed that in case of Sparse Feedback control for any $P\in\Spn$, we have
$\mathcal{S}_Y=\mathcal{S}$ is the set of matrices having at most $k$ non-zero rows.
\end{example}

It is worth remarking that  Proposition \ref{prop:Lyapunov2}, providing only a sufficient condition to solve \eqref{eq:Lyapunov2}, can introduce some conservatism leading to the introduction of some constraints on the matrix $P$, as in Example 1 and 2. In this paper, leveraging on the projection lemma \cite{gahinet1994linear} and through the introduction of additional variables, we overcome this drawback. The stability conditions resulting from this approach are generally known as extended \cite{pipeleers2009extended} or dilated \cite{ebihara2015springer} stability characterizations.
\subsection{Contribution}
As a first step, let us consider the following result, which provides a ``dilated'' version of \eqref{eq:Lyapunov}. The proof of the result can be easily obtained by following the general methodology illustrated in \cite{pipeleers2009extended, ebihara2015springer} and it is omitted here for the sake of brevity.
\medskip

\begin{theorem}
\label{theo:Extended}
Let $P\in\Spn$ and $K\in{\cal S}$. The following items are equivalent.
\begin{itemize}
\item [$(i)$] The inequality \eqref{eq:Lyapunov} is true;
\item [$(ii)$] There exists $X\in\R^{n\times n}$ such that
\begin{equation}
\label{eq:ExtendedLMI}
\begin{bmatrix}
0&P\\
\bullet&0
\end{bmatrix}+\He\left(\begin{bmatrix}
(A+BK)\\
-I
\end{bmatrix}\begin{bmatrix}
X&X
\end{bmatrix}\right)\prec \0.
\end{equation}
\end{itemize}\QEDB
\end{theorem}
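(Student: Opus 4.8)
The plan is to prove the equivalence by invoking the Projection Lemma (sometimes called the Finsler/elimination lemma) of \cite{gahinet1994linear}, which states that for given matrices $\Phi=\Phi\tr$, $U$, $V$, there exists a matrix $W$ with $\Phi + \He(U W V) \prec 0$ if and only if $N_U\tr \Phi N_U \prec 0$ and $N_V\tr \Phi N_V \prec 0$, where $N_U$ and $N_V$ denote matrices whose columns span the kernels of $U$ and $V$ respectively (and the condition is vacuous when the corresponding kernel is trivial). The strategy is to identify \eqref{eq:ExtendedLMI} with the parametrized inequality in the scalar variable $X$ and show that the two kernel conditions reduce, one to a tautology and the other to \eqref{eq:Lyapunov}.

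First I would set, in $\R^{2n}$,
\[
\Phi = \begin{bmatrix} 0 & P \\ \bullet & 0 \end{bmatrix}, \qquad U = \begin{bmatrix} A+BK \\ -I \end{bmatrix}, \qquad V = \begin{bmatrix} I & I \end{bmatrix},
\]
so that the term $\He\!\left(\begin{bmatrix}(A+BK)\\ -I\end{bmatrix}\begin{bmatrix}X & X\end{bmatrix}\right)$ is exactly $\He(U X V)$, and \eqref{eq:ExtendedLMI} is $\Phi + \He(U X V) \prec 0$. Thus $(ii)$ holds for some $X$ if and only if the two Projection-Lemma kernel conditions hold. Next I would compute the two kernels. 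Since $U$ has the block $-I$ in its lower part, $U$ has full column rank $n$, so $\ker U\tr$ — i.e. the left null space needed for the $N_U$ condition — is $n$-dimensional; a convenient basis is $N_U = \begin{bmatrix} I \\ A+BK \end{bmatrix}$, because $\begin{bmatrix} X\tr & X\tr\end{bmatrix}$... more carefully: the condition associated with $U$ is $N_U\tr \Phi N_U \prec 0$ where the columns of $N_U$ span $\ker(U\tr)=\{z : (A+BK)\tr z_1 = z_2\}\subset\R^{2n}$ partitioned as $z=(z_1,z_2)$; hence $N_U = \begin{bmatrix} I \\ (A+BK)\tr\end{bmatrix}$. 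Then
\[
N_U\tr \Phi N_U = \begin{bmatrix} I & (A+BK)\end{bmatrix}\begin{bmatrix} 0 & P \\ P & 0\end{bmatrix}\begin{bmatrix} I \\ (A+BK)\tr\end{bmatrix} = P(A+BK)\tr + (A+BK)P,
\]
which is precisely the left-hand side of \eqref{eq:Lyapunov}. For the $V$ side, $V=\begin{bmatrix} I & I\end{bmatrix}$ has full row rank, so $\ker V = \{(v,-v) : v\in\R^n\}$ with basis $N_V = \begin{bmatrix} I \\ -I\end{bmatrix}$, and
\[
N_V\tr \Phi N_V = \begin{bmatrix} I & -I\end{bmatrix}\begin{bmatrix} 0 & P \\ P & 0\end{bmatrix}\begin{bmatrix} I \\ -I\end{bmatrix} = -2P \prec 0,
\]
which is automatically true since $P\in\Spn$. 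Therefore the Projection Lemma gives: there exists $X$ satisfying \eqref{eq:ExtendedLMI} iff $\He((A+BK)P)\prec 0$, i.e. iff $(i)$ holds. This proves $(i)\Leftrightarrow(ii)$.

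For the reverse direction one can also argue directly and avoid the general Projection Lemma, which I would mention as the ``easy'' half: if $(i)$ holds, simply take $X = P$ (or $X=\tfrac12 P$ after tracking constants); substituting into \eqref{eq:ExtendedLMI} collapses the off-diagonal blocks and leaves a block inequality whose Schur complement is exactly \eqref{eq:Lyapunov}, so a congruence / Schur-complement computation closes it. The main obstacle, and the only place where care is genuinely needed, is bookkeeping the transposes and the factor-of-two in $\He(\cdot)$: one must be consistent about whether \eqref{eq:Lyapunov} is written with $P$ on the left or the right and ensure the kernel basis $N_U$ is built from $(A+BK)\tr$ rather than $(A+BK)$, since $U$ involves $A+BK$ directly while its left null space involves the transpose. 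Once that is pinned down, the rest is the routine verification sketched above, which is why the paper defers it to \cite{pipeleers2009extended, ebihara2015springer}.
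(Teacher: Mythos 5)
Your main argument is correct and is precisely the route the paper intends: the paper omits the proof and points to the dilated-LMI methodology of \cite{pipeleers2009extended,ebihara2015springer}, which is exactly the Projection Lemma applied with $\Phi=\left[\begin{smallmatrix}0&P\\P&0\end{smallmatrix}\right]$, $U=\left[\begin{smallmatrix}A+BK\\-I\end{smallmatrix}\right]$, $V=[\,I\ \ I\,]$. Your kernel computations are right: $N_{U}=\left[\begin{smallmatrix}I\\(A+BK)\tr\end{smallmatrix}\right]$ gives back the left-hand side of \eqref{eq:Lyapunov}, and $N_V=\left[\begin{smallmatrix}I\\-I\end{smallmatrix}\right]$ gives $-2P\prec\0$, which holds automatically since $P\in\Spn$. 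That already establishes $(i)\Leftrightarrow(ii)$.

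However, the closing remark is wrong on two counts and you should delete it. First, $(i)\Rightarrow(ii)$ is the \emph{hard} direction, not the easy one: the easy direction is $(ii)\Rightarrow(i)$, obtained by congruence with $N_U$, since $N_U\tr\He(UXV)N_U=\He(N_U\tr U\,X\,VN_U)=0$ because $U\tr N_U=0$. Second, the explicit choice $X=P$ (or any fixed scalar multiple of $P$) does not work in general: with $X=P$ the Schur complement of \eqref{eq:ExtendedLMI} with respect to its $(2,2)$ block is $\He\big((A+BK)P\big)+\tfrac12(A+BK)P(A+BK)\tr$, which already fails for $n=1$, $A+BK=-10$, $P=1$ (it equals $+30$) even though \eqref{eq:Lyapunov} holds. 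A data-dependent scaling $X=\alpha P$ with $\alpha$ small enough can be made to work, but not a universal constant; this is exactly why the continuous-time dilation is usually proved non-constructively via the Projection Lemma rather than by exhibiting $X$. Since your Projection Lemma argument covers both directions, the theorem is proved without this remark.
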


Building on the above result, the following ``design oriented'' condition can be obtained. The proof of the result follows directly from Theorem~\ref{theo:Extended}, hence it is omitted here. 
\begin{corollary}
\label{corol:Projection}
Assume that there exist $X\in\R^{n\times n}, P\in\Spn$, and $R\in\R^{m\times n}$ such that
\begin{align}
\label{eq:ExtendedLMIDesign_1}
&\begin{bmatrix}
0&P\\
\bullet&0
\end{bmatrix}+\He\left(\begin{bmatrix}
AX+BR& AX+BR\\
-X&-X
\end{bmatrix}\right)\prec \0,\\
&RX^{-1}\in{\cal S}.
\label{eq:ExtendedLMIDesign_2}
\end{align}
Then, $K=RX^{-1}$ solves Problem~\ref{problem:MainProblem}.\QEDB
\end{corollary}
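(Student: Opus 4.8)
The plan is to derive Corollary~\ref{corol:Projection} from Theorem~\ref{theo:Extended} by the linearizing substitution $R=KX$, exactly as \eqref{eq:Lyapunov2} is obtained from \eqref{eq:Lyapunov} via $Y=KP$ in Proposition~\ref{prop:Lyapunov2}. The only preliminary issue is to make sure that $X$ is invertible, so that $K\coloneqq RX^{-1}$ is well defined and the substitution can be reversed.

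For the invertibility of $X$, I would inspect the lower-right $n\times n$ block of the left-hand side of \eqref{eq:ExtendedLMIDesign_1}. The constant term $\left[\begin{smallmatrix}0&P\\\bullet&0\end{smallmatrix}\right]$ contributes $0$ there, while $\He\!\left(\left[\begin{smallmatrix}AX+BR&AX+BR\\-X&-X\end{smallmatrix}\right]\right)$ contributes $-\He(X)$. Since a negative definite matrix has negative definite diagonal blocks, \eqref{eq:ExtendedLMIDesign_1} forces $-\He(X)\prec\0$, i.e.\ $\He(X)\succ 0$; hence $Xv=0$ implies $v\tr\He(X)v=0$ and therefore $v=0$, so $X$ is nonsingular. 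Thus $K=RX^{-1}$ is well defined, and by hypothesis \eqref{eq:ExtendedLMIDesign_2} we have $K\in\mathcal{S}$.

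Next, substituting $R=KX$ and factoring out the common right factor gives
\[
\left[\begin{array}{cc}AX+BR&AX+BR\\-X&-X\end{array}\right]
=\left[\begin{array}{c}A+BK\\-I\end{array}\right]\left[\begin{array}{cc}X&X\end{array}\right],
\]
so \eqref{eq:ExtendedLMIDesign_1} is literally inequality \eqref{eq:ExtendedLMI} of Theorem~\ref{theo:Extended}, written for this particular $P\in\Spn$, this particular $K\in\mathcal{S}$, and with the free matrix of that theorem instantiated to $X$. Theorem~\ref{theo:Extended}, implication $(ii)\Rightarrow(i)$, then gives that \eqref{eq:Lyapunov} holds, i.e.\ $(A+BK)P+P(A+BK)\tr\prec\0$ with $P\in\Spn$. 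By the usual Lyapunov argument this means $A+BK$ is Hurwitz (recall that the dual form \eqref{eq:Lyapunov} is equivalent to \eqref{eq:Lyapunov0}, as already noted in the text). Since moreover $K\in\mathcal{S}$, the gain $K=RX^{-1}$ solves Problem~\ref{problem:MainProblem}, which is the claim.

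The whole argument is short; the one step deserving care is the nonsingularity of $X$, and it is precisely here that the dilated characterization pays off: the slack variable $X$ picks up nonsingularity for free from a diagonal block of the LMI, so — unlike in Proposition~\ref{prop:Lyapunov2} and Examples~1--2 — no structural constraint needs to be imposed on the Lyapunov matrix $P$.
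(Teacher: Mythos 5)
Your proof is correct and follows exactly the route the paper intends: the paper omits the argument, stating only that the corollary ``follows directly from Theorem~\ref{theo:Extended}'', and your substitution $R=KX$ together with the implication $(ii)\Rightarrow(i)$ is precisely that derivation. Your extra observation that the $(2,2)$ block of \eqref{eq:ExtendedLMIDesign_1} forces $\He(X)\succ 0$ and hence the nonsingularity of $X$ is a correct and worthwhile detail that the paper leaves implicit (it is tacitly assumed via the appearance of $X^{-1}$ in \eqref{eq:ExtendedLMIDesign_2}).
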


The main advantage offered by the above result is that for this equivalent formulation of \eqref{eq:Lyapunov}, in principle the Lyapunov matrix $P$ is not subject to any constraint.

Now we are in a position to state the main result of this paper, whose proof is reported in the Appendix.
\begin{theorem}
\label{theo:Main}
Let $\{S_1, S_2, \dots, S_k\}$ be a basis of $\mathcal{S}$, let
$$
L \coloneqq [\ S_1\ |\ S_2\ |\ \dots\ |\ S_k\ ],
$$
and define the \textit{structure set}
\begin{equation}
\label{eq:ScalQ}
\Upsilon\coloneqq \{Q\in\R^{n\times n}\colon \exists \Lambda\in\mathcal{R}^{k} \,\,\text{s.t.}\,\,L (\Id_{k}\otimes Q)=L (\Lambda\otimes\Id_n)\}.
\end{equation}
Assume that there exist $P\in\Spn$, $R\in\Span\{S_1, S_2,\dots, S_k\}$, and a nonsingular $X\in\R^{n\times n}$ such that:
\begin{subequations}
\label{eq:cond2}
\begin{align}
\label{eq:cond21}
&X\in\Upsilon,\\
\label{eq:cond22}
&\begin{bmatrix}
0&P\\\bullet&0
\end{bmatrix}+\He\left(\begin{bmatrix}
AX+BR& AX+BR\\
-X&-X
\end{bmatrix}\right)\prec \0.
\end{align}
\end{subequations}
Then $K=RX^{-1}$ solves Problem~\ref{problem:MainProblem}. \QEDB
\end{theorem}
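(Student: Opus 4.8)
The goal is to derive Theorem~\ref{theo:Main} from Corollary~\ref{corol:Projection}. Looking at the two statements side by side, the only gap is the following: Corollary~\ref{corol:Projection} requires $RX^{-1}\in\mathcal{S}$, whereas Theorem~\ref{theo:Main} instead asks for $R\in\Span\{S_1,\dots,S_k\}=\mathcal{S}$ together with the structural constraint $X\in\Upsilon$. So the entire content of the proof is to show the implication
\[
R\in\mathcal{S},\quad X\in\mathcal{R}^n,\quad X\in\Upsilon \quad\Longrightarrow\quad RX^{-1}\in\mathcal{S},
\]
after which invoking Corollary~\ref{corol:Projection} with \eqref{eq:cond22} closes the argument immediately.

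The plan is therefore to unpack the definition of $\Upsilon$. First I would fix the vectorization convention implicit in the block-row matrix $L=[\,S_1\mid\cdots\mid S_k\,]\in\R^{m\times kn}$: for a coefficient vector $\lambda=(\lambda_1,\dots,\lambda_k)\tr$, right-multiplication of $L$ by $\lambda\otimes\Id_n$ picks out $\sum_i\lambda_i S_i$, the generic element of $\mathcal{S}$. So $R\in\mathcal{S}$ means $R=L(\rho\otimes\Id_n)$ for some $\rho\in\R^k$. Now suppose $X\in\Upsilon$, i.e.\ there is a nonsingular $\Lambda\in\mathcal{R}^k$ with $L(\Id_k\otimes X)=L(\Lambda\otimes\Id_n)$. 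The key computation is to evaluate $R X^{-1}$ by inserting $X^{-1}$ on the right of $R=L(\rho\otimes\Id_n)$: since $(\rho\otimes\Id_n)X^{-1}=(\rho\otimes X^{-1})=(\Id_k\otimes X^{-1})(\rho\otimes\Id_n)$, and more usefully using the mixed-product rule in the other order, one writes
\[
R X^{-1} = L(\rho\otimes \Id_n)X^{-1}.
\]
I would massage this so that the relation defining $\Upsilon$ can be applied. Concretely, note $(\rho\otimes\Id_n) = (\Id_k\otimes \Id_n)(\rho\otimes\Id_n)$ is not yet in a usable shape; instead I would use that $L(\Id_k\otimes X)=L(\Lambda\otimes\Id_n)$ implies, by right-multiplying by $(\Lambda^{-1}\rho)\otimes X^{-1}$ and using the mixed product rule on both sides, a relation of the form $L\big((\Lambda^{-1}\rho)\otimes\Id_n\big)=L\big(\rho\otimes X^{-1}\big)=R X^{-1}$. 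Reading off the left-hand side, $RX^{-1}=\sum_i (\Lambda^{-1}\rho)_i S_i\in\mathcal{S}$, which is exactly what is needed.

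The step I expect to be the main obstacle — or at least the one requiring care rather than ingenuity — is the bookkeeping with the Kronecker identities: making sure the conventions for $L$, for $\Id_k\otimes Q$ versus $\Lambda\otimes\Id_n$, and for how a coefficient vector acts, are all mutually consistent, and in particular verifying that the nonsingularity of $\Lambda$ is genuinely used (it is: $\Lambda^{-1}\rho$ must exist for the argument to produce a bona fide element of $\mathcal{S}$, and nonsingularity of $X$ is what lets us pass from the defining identity of $\Upsilon$ to an identity involving $X^{-1}$). Once the identity $RX^{-1}=L((\Lambda^{-1}\rho)\otimes\Id_n)$ is established, membership $RX^{-1}\in\mathcal{S}$ is immediate, conditions \eqref{eq:ExtendedLMIDesign_1}--\eqref{eq:ExtendedLMIDesign_2} of Corollary~\ref{corol:Projection} are met verbatim by \eqref{eq:cond22} and the previous line, and the conclusion that $K=RX^{-1}$ solves Problem~\ref{problem:MainProblem} follows. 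I would also add a short remark that the role of $\Upsilon$ is precisely to characterize those $X$ for which left division by $X$ maps $\mathcal{S}$ into itself, which is the structural price paid for leaving $P$ completely free in \eqref{eq:cond22}.
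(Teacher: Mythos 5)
Your proposal is correct and follows essentially the same route as the paper: stability comes from Corollary~\ref{corol:Projection} via \eqref{eq:cond22}, and membership $RX^{-1}\in\mathcal{S}$ comes from right-multiplying the defining identity $L(\Id_k\otimes X)=L(\Lambda\otimes\Id_n)$ by a Kronecker factor built from $\Lambda^{-1}$ and $X^{-1}$ (the paper does this in two steps, first showing $X^{-1}\in\Upsilon$ and then expanding $R$ in the basis, whereas you fold both into one computation). The nonsingularity of $\Lambda$ and $X$ is used in exactly the same way in both arguments.
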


Condition \eqref{eq:cond2} requires the design matrix $X$ to be in the structure set $\Upsilon$ and to satisfy an LMI. It should be noticed that, from definition of structure set in \eqref{eq:ScalQ}, condition \eqref{eq:cond21} reduces to a system of linear equations and thus the  complexity of the method is mainly dictated by the solution to the LMI \eqref{eq:cond22}.

Moreover, we emphasize that in many applications condition \eqref{eq:cond21} corresponds essentially to constrain the zero pattern of the design matrix $X$. In next section we show that  both network decentralized control and overlapping control fall in the proposed framework. More importantly. Theorem~\ref{theo:Main} allows us to extend the range of applicability of these previous results, by reducing conservatism.
\section{Examples}
\label{sec:Examples}
In this section, we showcase our methodology in some numerical examples\footnote{Code is available at \url{https://github.com/f-ferrante/CDC19SF.}}.
 \subsection{Network decentralized control}
In this section we focus on the decentralized control problem, introduced in \cite{blanchini2015network} and briefly recalled in Section~\ref{sec:Decentralized}. More precisely, we show in an example that the conditions derived in Theorem \ref{theo:Main} are less restrictive than those presented in Proposition \ref{prop:Lyapunov2}. \subsubsection{Example}\label{ex1:Decentralized}
We consider a case where the number of subsystems is $N=3$, each of dimension $n_1=n_2=n_3=1$. Let\footnote{\textcolor{blue}{In the published version, the matrix $A$ was wrongly typed in the text. The value $1$ should be replaced with $0.1$.}}
$$A= \left[\begin{array}{ccc}
0&    0   &  0\\
     0 &    \textcolor{blue}{0.1}   &  0\\
     0   &  0    & 0\end{array}\right],
\quad B= \left[\begin{array}{cc} 1 & 0\\ 0 & 1\\ 1 & 1\end{array}\right].
 $$
The topology of the corresponding decentralized control system is schematically represented in Fig.~\ref{fig:graph}. It is worthwhile to notice that in this case the \emph{external connectivity} assumption (see Definition 6 in \cite{blanchini2015network}) is not true. In other words, none of the nodes is directly connected with the external environment so that the two control inputs $u_1$ and $u_2$ needs to be ``shared" among the systems.
\begin{figure}[!h]
     \centering
     \begin{tikzpicture}
  [scale=.9,auto=left]
  \node[style={circle,fill=cyan!50}] (n4) at (10,8)  {1};
  \node[style={circle,fill=cyan!50}] (n5) at (8,9)  {2};
  \node [style={circle,fill=cyan!50}](n3) at (10,10)  {3};
    \draw[-] (n4) -- node[below right] {$u_1$} (n3) ;
     \draw[-] (n5) -- node[above left] {$u_2$} (n3) ;
\end{tikzpicture}
\caption{Topology of the decentralized control system considered in Example~\ref{ex1:Decentralized}.}
\label{fig:graph}
\end{figure}
In this case, due to the structure of $B$, one has 
$$
\mathcal{S}(B\tr)=\left\{K\in\R^{2\times 3}\colon K=\begin{bmatrix}a& 0 &b\\0&c &d
\end{bmatrix}\vert\,(a, b, c, d)\in\R^4\right\}.
$$
We derive a structured stabilizer using Theorem~\ref{theo:Main}. To this end, as a first step, we give an explicit representation of the set $\Upsilon$ in the statement of Theorem~\ref{theo:Main}. In particular, by denoting $Q=(q_{i j})\in\R^{3\times 3}$ and $\Lambda=(\lambda_{i j})\in\mathcal{R}^{4}$, straightforward manipulations lead to  
$
\Upsilon=\{Q\in\R^{3\times 3}\colon \exists \Lambda\in\mathcal{R}^{4}\,\,\text{s.t.}\,\,\Pi(Q,\Lambda)=0\},
$
where $\Pi$ is defined in \eqref{eq:Pi} (at the top of the next page). 
\begin{figure*}[t!]{
\begin{equation}
\scalebox{0.8}{$\Pi(Q,\Lambda)=
\left[\begin{array}{cccccccccccc} q_{11}-\lambda_{11} & q_{12} & q_{13}-\lambda_{41} & -\lambda_{12} & 0 & -\lambda_{42} & -\lambda_{13} & 0 & -\lambda_{43} & q_{31}-\lambda_{14} & q_{32} & q_{33}-\lambda_{44}\\ 0 & -\lambda_{21} & -\lambda_{31} & q_{21} & q_{22}-\lambda_{22} & q_{23}-\lambda_{32} & q_{31} & q_{32}-\lambda_{23} & q_{33}-\lambda_{33} & 0 & -\lambda_{24} & -\lambda_{34} \end{array}
\right]
\label{eq:Pi}$}
\end{equation}}
\end{figure*}
Standard arguments enable to conclude that $\Pi(Q,\Lambda)=0$ for some nonsingular $\Lambda$ implies $q_{12}=q_{32}=q_{21}=q_{31}=0$, that is
$
Q=\left[\begin{smallmatrix}
q_{11} & 0 & q_{13}\\ 0 & q_{22} & q_{23}\\ 0 & 0 & q_{33}
\end{smallmatrix}\right].
$
Therefore, a solution to Problem~\ref{problem:MainProblem} can be obtained by solving \eqref{eq:ExtendedLMIDesign_1}
with $X=\left[\begin{smallmatrix}
x_{11} & 0 & x_{13}\\ 0 & x_{22} & x_{23}\\ 0 & 0 & x_{33}
\end{smallmatrix}\right]$ and
$R=\left[\begin{smallmatrix}
r_{11} & 0 & r_{13}\\ 0 & r_{22} & r_{23}
\end{smallmatrix}\right].$
In particular, by solving \eqref{eq:ExtendedLMIDesign_1} in Matlab\textsuperscript{\tiny\textregistered} using the YALMIP package \cite{lofberg2004yalmip} combined with the solver SDPT3~\cite{tutuncu2003solving}, one gets
$$
\begin{aligned}
&P=\left[\begin{array}{ccc} 14.06 & -2.755 & 0.6899\\ -2.755 & 9.04 & 7.419\\ 0.6899 & 7.419 & 20.76 \end{array}\right],\\
&R=\left[\begin{array}{ccc} -4.183 & 0 & -4.106\\ 0 & -4.95 & -4.663 \end{array}\right],\\
&X=\left[\begin{array}{ccc}9.328 & 0 & -1.598\\ 0 & 5.5 & 6.44\\ 0 & 0 & 12.52\end{array} \right],
\end{aligned}
$$ 
which leads to 
$$
K=\left[\begin{array}{ccc} -0.4484 & 0 & -0.3851\\ 0 & -0.9 & 0.0905 \end{array}\right]
$$
Using Proposition~\ref{prop:Lyapunov2}, Problem~\ref{problem:MainProblem} can be solved by enforcing $P^{-1}\in\Upsilon\cap\Spn$. However, it is worthwhile to observe that in this case, one has  $\Upsilon\cap\Spn=\Dpn$,
thereby implying $P\in\Dpn$. In other words, in this example the approach outlined in Proposition~\ref{prop:Lyapunov2} requires one to deal with a diagonal Lyapunov function and this can lead to a very conservative analysis. This drawback is largely mitigated by the proposed approach. 

According to \cite[Proposition 1]{blanchini2015network} a sufficient condition for the existence of a stabilizing decentralized control is to find a positive definite matrix $W$ sharing the same block-diagonal structure with $A$ and satisfying
\begin{equation}
\Gamma\coloneqq AW+WA\tr -2\gamma BB\tr \prec0.
 \label{eq:LMI_blanchini}
\end{equation}
Then, a decentralized controller is given by 
$K=-\gamma B^{\top }W^{-1}$.
In particular,  $W=\Diag(w_1, w_2, w_3)\in\mathbb{D}^3_+$ and \eqref{eq:LMI_blanchini} leads to 
$\det \Gamma={4\gamma^2 w_{2}}/{5}$. Since  $w_2>0$, $\Gamma$ cannot be negative definite and\footnote{\textcolor{blue}{In the published version, it is stated that Theorem 2 does not provide a viable approach for the solution to Problem 1. This is clearly a typo since Theorem 2 is used above to design the proposed gain $K$.}} \textcolor{blue}{\cite[Proposition 1]{blanchini2015network}} does not provide a viable approach for the solution to Problem~\ref{problem:MainProblem}. 
\subsection{Overlapping Control}
\label{ex:Overlapping}
We consider the \emph{control with overlapping information structure constraints} problem in \cite[Chapter 2]{zecevic2010control}  and we revisit \cite[Example 2.16]{zecevic2010control}. Let
$$
A=\left[\begin{array}{ccc}
1 & 4 & 0\\ 1 & 2 & 2\\ 0 & -2 & 3
\end{array}\right],
\quad B= \left[\begin{array}{cc} 1 & 0\\ 0 & 0\\ 0 & 1\end{array}\right],
$$
and 
$$
\mathcal{S}=\left\{K\in\R^{2\times 3}\colon K=\begin{bmatrix}a& b &0\\0&c &d
\end{bmatrix}\vert\,(a, b, c, d)\in\R^4\right\}.
$$
The setup analyzed in this example is schematically represented in Fig.~\ref{fig:OverlappinGraph}.

As observed in \cite{zecevic2010control}, the approach outlined in Proposition~\ref{prop:Lyapunov2} fails to determine a feasible feedback gain. 
In particular, our example falls in the so-called Type II problem defined in
\cite[Fig.\ 2.6]{zecevic2010control}, since the control inputs do not directly affect the ``shared'' state $x_2$. This is recognized to be a critical situation,
considerably more challenging than the Type I overlapping control, in which the shared state is directly influenced by the control inputs. In particular, in this case
the traditional approach based on the concept of expansion and the Inclusion Principle introduced in \cite{ikeda1984} does not apply, and in \cite{zecevic2010control} a two-steps procedure based on the introduction of \textit{low-rank centralized corrections} is proposed. This solution however violates by construction the structural constraints (even if with a low-impact term).

Now we show how our approach hinging upon Theorem~\ref{theo:Extended} can be successfully adopted in this case to solve Problem~\ref{problem:MainProblem}.
\begin{figure}[!h]
 \centering
\begin{tikzpicture}[->,>=stealth',auto,node distance=2cm,
  thick,main node/.style={circle,draw,font=\sffamily\Large\bfseries}, scale=1]
  \node[main node] (1) {$x_1$};
  \node[main node] (2) [right of=1] {$x_2$};
  \node[main node] (3) [right of=2] {$x_3$};
  \path[every node/.style={font=\sffamily\small}]
    (1) edge [bend left] node [right] {} (2)
    (2) edge  [bend left] node [right] {} (1)
    (3) edge [bend left] node [right] {} (2) 
    (2) edge [bend left] node [left] {} (3) ;
     \node[style={circle,fill=cyan!50}] (n4) at (-1.4,0){$u_1$};
         \node[style={circle,fill=cyan!50}] (n5) at (5.5,0){$u_2$};
         
\path [line] (n4) -- node [text width=2.5cm,midway,above ] {} (1);
\path [line] (n5) -- node [text width=2.5cm,midway,above ] {} (3);
\end{tikzpicture}
\caption{Topology of the overlapping control problem in Example~\ref{ex:Overlapping}.}
\label{fig:OverlappinGraph}
\end{figure}
   
As a first step, notice that by following the same rationals as in Example~\ref{ex1:Decentralized}, it can be readily shown that the set $\Upsilon$ in this case reads as
$$
\Upsilon=\left\{Q\in\mathcal{R}^{3}:
Q=\begin{bmatrix}
q_{11} & q_{12} & 0\\ 0 & q_{22} & 0\\ 0 & q_{32} & q_{33}
\end{bmatrix}, q_{ij}\in\R\right\}.
$$
As such, also in this case, Problem~\ref{problem:MainProblem} can be solved by solving \eqref{eq:ExtendedLMIDesign_1}
with 
$$
\begin{aligned}
R&=\begin{bmatrix}
r_{11} & r_{12} & 0\\ 0 & r_{22} & r_{23}
\end{bmatrix},\qquad
X&=\begin{bmatrix}
x_{11} & x_{12} & 0\\ 0 & x_{22} & 0\\ 0 & x_{23} & x_{33}
\end{bmatrix}
\end{aligned} 
$$
In particular, by solving \eqref{eq:ExtendedLMIDesign_1} in Matlab\textsuperscript{\tiny\textregistered} using the YALMIP package \cite{lofberg2004yalmip} combined with the solver SDPT3~\cite{tutuncu2003solving}, one gets
$$
\begin{aligned}
&P=\left[\begin{array}{ccc} 15.6 & -4.453 & 1.708\\ -4.453 & 9.948 & -9.267\\ 1.708 & -9.267 & 22.14 \end{array}\right],\\
&R=\left[\begin{array}{ccc} -15.22 & -13.79 & 0\\ 0 & 24.65 & -24.86 \end{array}\right],\\
&X=\left[\begin{array}{ccc} 3.973 & -2.456 & 0\\ 0 & 4.038 & 0\\ 0 & -5.053 & 2.787\end{array} \right],
\end{aligned} 
$$ 
from which
$
K=\left[\begin{array}{ccc}  -3.831 & -5.744 &0\\ 0 & -5.059 & -8.922\end{array}\right].
$

\section{Concluding remarks}
\label{sec:Conclusion}
In this paper we developed a new method for static state feedback design with structural constraints. Compared to the state of the art, the main advantage of the proposed approach is its ability to eliminate the rigid block-diagonal structural constraints on the Lyapunov matrix, as shown in many examples, by still being computationally efficient. 
It is worth remarking that this work concentrated on the pure state feedback problem, without considering the minimization of $\mathcal{H}_2$ or $\mathcal{H}_\infty$ costs. This choice was done on purpose, since we feel that this simplest approach already captures the main advantage.
Indeed, extending the presented solutions to the optimal designs would simply require to substitute the Lyapunov
inequality with the LMIs arising from the KYP lemma, and the main ideas at the basis of our approach would also apply to that framework.
Moreover, the proposed design method can be also extended to include nonlinearities.

\section{Acknowledgments}
The authors would like to thank Dimitri Peaucelle for his valuable comments and suggestions.

\appendix
\begin{proof}[Proof of Theorem~\ref{theo:Main}]
Using inequality \eqref{eq:cond22} and applying Corollary~\ref{corol:Projection} we deduce that $K=RX^{-1}$ is a stabilizing gain. To conclude the proof, we only need to show that $K\in\mathcal{S}$. To this end, first we show that \eqref{eq:cond21} implies that $X^{-1}\in\Upsilon$. \textcolor{blue}{Pick $X\in\Upsilon$ nonsingular}. Then, there exists $\Omega\in\mathbb{R}^{k\times k}$ \textcolor{blue}{nonsingular}\footnote{\textcolor{blue}{From the definition of $\Upsilon$, $\Omega$ can be always selected to be nonnsingular and from the statement of Theorem~\ref{theo:Main} $X$ is nonsingular.}} such that
$
L(I_k\otimes X)=L(\Omega\otimes I_n).
$
Multiplying both members by $\Omega^{-1}\otimes X^{-1}$ we get
$
L(\Omega^{-1}\otimes I_n)=L(I_k\otimes X^{-1})
$
from which we deduce that $X^{-1}\in\Upsilon$. Since $X^{-1}\in\Upsilon$, there exists $\Lambda=(\lambda_{ij})\in\mathcal{R}^{k}$ such that
\begin{equation}
\begin{aligned}
&\left[\begin{array}{c|c|c|c} S_1X^{-1}&S_2X^{-1}&\dots &S_k X^{-1}\end{array}\right]=\\
&\left[\begin{array}{c|c|c|c} \sum_{i=1}^k\lambda_{i 1}S_i& \sum_{i=1}^k\lambda_{i 2}S_i&\dots&\sum_{i=1}^k\lambda_{i k}S_i
\end{array}\right].
\end{aligned}
\label{eq:span_proof}
\end{equation}
Moreover, since $R\in\Span\{S_1, S_2,\dots, S_k\}$, there exist $\alpha_1, \alpha_2, \dots, \alpha_k$ such that 
$
R=\sum_{\textcolor{blue}{j}=1}^k \alpha_{\textcolor{blue}{j}}S_{\textcolor{blue}{j}}
$, which yields
$
RX^{-1}=\sum_{\textcolor{blue}{j}=1}^k \alpha_{\textcolor{blue}{j}} S_{\textcolor{blue}{j}} X^{-1}.
$
Thus, by using \eqref{eq:span_proof} one has
$$
K=RX^{-1}=\sum_{j=1}^k \alpha_j \sum_{i=1}^k\lambda_{i j}S_i =\sum_{i=1}^kc_iS_i
$$
with $c_i=\sum_{j=1}^k\lambda_{ij}\alpha_j,$ from which we conclude $K\in\mathcal{S}$.
\end{proof}
\bibliographystyle{IEEEtran}
\bibliography{biblio_frg,biblio}
\end{document}